\newcommand{\bN}{{\mathbb N}}
\newcommand{\bP}{{\mathbb P}}
\newcommand{\bR}{{\mathbb R}}
\newcommand{\bZ}{{\mathbb Z}}
\newcommand{\cO}{{\mathscr O}}
\DeclareSymbolFont{cyrletters}{OT2}{wncyr}{m}{n}
\DeclareMathSymbol{\Sha}{\mathalpha}{cyrletters}{"58}
\DeclareMathOperator{\Aut}{Aut}
\DeclareMathOperator{\Hom}{Hom}
\DeclareMathOperator{\id}{id}
\newcommand{\longto}{\longrightarrow}
\DeclareMathOperator{\pr}{pr}
\newcommand{\sets}{{\sf sets}}
\newcommand{\xyinj}{\ar@{^(->}}
\DeclareMathOperator{\GL}{GL}
\DeclareMathOperator{\Proj}{Proj}
\DeclareMathOperator{\Spec}{Spec}
\DeclareMathOperator{\Gal}{Gal}
\def\10{{\overrightarrow{10}}}
\def\01{{\overrightarrow{01}}}
\newcommand{\fet}{\text{\sf f\'et}}
\newcommand{\op}{{\rm op}}
\newtheorem{thm}{Theorem}[section]
\newtheorem{prop}[thm]{Proposition}
\newtheorem{lem}[thm]{Lemma}
\newtheorem{thmABC}{Theorem}
\theoremstyle{definition}
\newtheorem{defi}[thm]{Definition}
\theoremstyle{remark}
\newtheorem{rmk}[thm]{Remark}
\newenvironment{pro*}[1][\proofname]{{\it{#1:}} }{}
\newenvironment{pro**}[1][]{{\it{#1}} }{\hfill $\square$}
\numberwithin{equation}{section}
\let\origmaketitle\maketitle
\def\maketitle{
  \begingroup
  \def\uppercasenonmath##1{} 
  \origmaketitle
  \endgroup
}
\def\k{{k_a}}
\def\x{{\overline x}}
\def\y{{\overline y}}
\def\z{{\overline z}}
\def\Yy{Y_{\k}}
\def\Ee{{\widetilde E}}
\newcounter{step}[thm]
\newcommand{\newstep}[1]{\stepcounter{step}\noindent\emph{\textsc{Step \arabic{step}:} ~ #1.}\par\smallskip}
\begin{document}
\title{Godeaux-Serre Varieties with Prescribed Arithmetic Fundamental Group} 
\author{Nithi Rungtanapirom}
\address{Nithi Rungtanapirom, Institut f\"ur Mathematik,  Goethe--Universit\"at Frankfurt, Ro\-bert-Mayer-Str. {6--8},
60325~Frankfurt am Main, Germany}
\email{rungtana@math.uni-frankfurt.de}

\begin{abstract}
We show that for any given field $k$ and natural number $r\geq2$, every continuous extension of the absolute Galois group $\Gal_k$ by a finite group is the arithmetic fundamental group of a geometrically connected smooth projective variety over $k$ of dimension $r$.
\end{abstract}

\maketitle

\tableofcontents


\section*{Introduction}
The difficult question which groups can occur as fundamental groups of smooth projective varieties over an algebraically closed field is still an open question. As listed in \cite{arapura}, there are several classes of groups for which this question is answered positively, but also many that yield negative results. For example, every finite group occurs as such a fundamental group. In fact, Serre constructed in \cite[Prop.15]{serre58} a smooth projective variety which is a complete intersection of dimension at least $2$ and on which a given finite group acts without fixed points. Hence the quotient is a smooth projective variety with the given finite group as fundamental group, the \textbf{Godeaux-Serre variety}.

\smallskip

We are interested in the following arithmetic situation: Let $k$ be an arbitrary field. It is known that for an arbitrary geometrically connected scheme $X$ of finite type over $k$, we have the exact sequence
\begin{equation} \label{exseq:arithmpi1} \tag{$\ast$}
1 \longto \pi_1(X\otimes_k\k) \longto \pi_1(X) \longto \Gal_k \longto 1,
\end{equation}
where $\k$ denotes an algebraic closure of $k$, cf.~\cite[IX Thm.6.1]{SGA1}. Hence one might ask which continuous extensions of the absolute Galois group $\Gal_k$ occur as arithmetic fundamental groups of smooth projective geometrically connected varieties over $k$. This question is even more difficult than the question for varieties over algebraically closed fields. Here we restrict our attention to extensions of $\Gal_k$ by a finite group, i.e.~the case $\pi_1(X\otimes_k\k)$ is finite. Our main result can be formulated as follows:
\begin{thmABC}[see Theorem \ref{thm:mainresult}] \label{thmABC:main}
Let $k$ be a field, $G$ a finite group, $r\in\bN$ with $r\geq2$ and
\[
1 \longto G \longto E \longto \Gal_k \longto 1
\]
a continuous extension of profinite groups. There exists a geometrically connected smooth projective variety $X$ over $k$ of dimension $r$ such that the sequence \eqref{exseq:arithmpi1} is isomorphic to the given extension.
\end{thmABC}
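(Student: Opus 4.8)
The plan is to reduce the arithmetic statement to a single application of Serre's construction over the prime field, twisted by the torsor attached to a finite Galois extension of $k$. \emph{First, a purely group-theoretic reduction to finite level.} I claim there are a finite Galois extension $M/k$, with $\Gamma := \Gal(M/k)$, and a finite group $\bar E$ sitting in an extension $1 \to G \to \bar E \to \Gamma \to 1$, together with an isomorphism $E \cong \bar E \times_\Gamma \Gal_k$ compatible with the projections to $\Gal_k$. Indeed, the identity of the profinite group $E$ has a neighbourhood basis $\{U_i\}$ of open normal subgroups, directed downwards; intersecting with the \emph{finite} group $G$ gives a downward-directed family of subgroups of $G$, which therefore has a least element, and that element is $\{1\}$ since the $U_i$ intersect in $\{1\}$. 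Fixing such a $U$ with $U \cap G = 1$, one sets $\Gal_M := UG/G$ (an open normal subgroup of $\Gal_k$, corresponding to a finite Galois $M/k$) and $\bar E := E/U$; then $\ker(\bar E \to \Gamma) \cong G$, and the natural map $E \to \bar E \times_\Gamma \Gal_k$ has kernel $U \cap G = 1$ and is an isomorphism by a comparison of orders.

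\emph{Next, Serre's construction for $\bar E$, twisted.} Let $k_0 \subseteq k$ be the prime field. Every finite group embeds into $\GL_n(k_0)$ — via rational representations if $\Char k = 0$, via the regular permutation representation over $\bF_p$ if $\Char k = p$ — so Serre's argument in \cite[Prop.~15]{serre58} applies to $\bar E$ and yields a smooth complete intersection $W_0 \subseteq \bP^N_{k_0}$ of dimension $r$ with a free linear $\bar E$-action; as $r \geq 2$, $W_0$ is geometrically connected and geometrically simply connected by the Lefschetz theorems for fundamental groups of complete intersections. Put $W := W_0 \otimes_{k_0} k$ and let $\bar E$ act on $W \otimes_k M = W_0 \otimes_{k_0} M$ diagonally: on $W$ through the given action, on $\Spec M$ through $\bar E \twoheadrightarrow \Gamma = \Gal(M/k)$. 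This action is free (a geometric point of $\Spec M$ has trivial $\Gamma$-stabiliser, so a stabilising element lies in $G$, which acts freely on $W$), so
\[
X := (W \otimes_k M)\big/\bar E
\]
is a smooth projective $k$-variety of dimension $r$, and $W \otimes_k M \to X$ is a finite étale $\bar E$-Galois cover with connected source $W_0 \otimes_{k_0} M$. Base-changing along $\k/k$ and using $M \otimes_k \k \cong \k^{[M:k]}$, the scheme $(W \otimes_k M) \otimes_k \k$ is a disjoint union of $|\Gamma|$ copies of $W \otimes_k \k$ permuted transitively by $\bar E$ through $\Gamma$ with point-stabiliser $G$, whence $X \otimes_k \k \cong (W \otimes_k \k)/G$; thus $X$ is geometrically connected and, as $W \otimes_k \k$ is simply connected and $G$ acts freely, $\pi_1(X \otimes_k \k) \cong G$.

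\emph{Finally, the identification of $\pi_1(X)$.} The connected $\bar E$-Galois cover $W \otimes_k M \to X$ gives $1 \to \pi_1(W_0 \otimes_{k_0} M) \to \pi_1(X) \to \bar E \to 1$ with $\pi_1(W_0 \otimes_{k_0} M) = \Gal_M$ (geometric simple connectedness of $W_0$), while \eqref{exseq:arithmpi1} and the previous paragraph give $1 \to G \to \pi_1(X) \to \Gal_k \to 1$. The two induced maps $\pi_1(X) \to \Gamma$ agree, because the $\Gamma$-cover of $X$ corresponding to $G \trianglelefteq \bar E$ is $(W \otimes_k M)/G = (W/G) \otimes_k M$, which is canonically $X \otimes_k M$ (the twisting torsor becomes trivial over $M$). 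Hence there is a map $\pi_1(X) \to \bar E \times_\Gamma \Gal_k$; its kernel is the fundamental group of a connected component of $(W \otimes_k M) \otimes_k \k$ — namely of $W_0 \otimes_{k_0} \k$, hence trivial — and it is an isomorphism by a comparison of orders. Together with the reduction step this gives an isomorphism $\pi_1(X) \cong E$ over $\Gal_k$, i.e.\ an isomorphism between \eqref{exseq:arithmpi1} and the given extension.

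\emph{The main obstacle} is the bookkeeping in this last step: one has to pin down base points (fibre functors) so that the two short exact sequences for $\pi_1(X)$ — one with kernel $\Gal_M$ over $\bar E$, one with kernel $G$ over $\Gal_k$ — are genuinely compatible and glue to $\bar E \times_\Gamma \Gal_k$, and so that the resulting isomorphism is the identity on $\Gal_k$ and carries $\pi_1(X \otimes_k \k)$ to $G$ correctly. A lesser point, relevant over a finite prime field and in characteristic $p$, is to make sure Serre's construction genuinely provides a \emph{free} action on a geometrically simply connected complete intersection of the prescribed dimension $r \geq 2$; this is classical.
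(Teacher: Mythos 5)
Your proposal is correct and follows essentially the same route as the paper: the same reduction of $E$ to a finite quotient $\bar E$ via an open normal subgroup meeting $G$ trivially (with $E\cong\bar E\times_\Gamma\Gal_k$), the same Godeaux--Serre construction with a twisted action of $\bar E$ (your diagonal action on $W\otimes_kM$ is, by Galois descent, the paper's semilinear action on an equivariant complete intersection in $\bP^n_{k'}$), and the same gluing of the two fundamental-group sequences through the fiber product. The one point you underestimate is the Bertini step: over a finite prime field Serre's generic-hyperplane argument fails, and one needs Poonen's Bertini theorem over finite fields (possibly with hypersurfaces of higher degree) rather than anything ``classical''; this, together with the base-point bookkeeping you flag, is exactly where the paper spends its technical effort.
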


Also note that Harari and Stix constructed in \cite[Remark 2(2)]{harari-stix} an example of a real projective variety $X$ with $\pi_1(X)\cong\bZ/4\bZ$ as special case of Theorem \ref{thmABC:main}. This provides an example of a real Godeaux-Serre variety without real points since $\bZ/4\bZ$ as extension of $\Gal_\bR\cong\bZ/2\bZ$ by $\bZ/2\bZ$ does not split.

\smallskip

The construction in the general context is similar to that of Godeaux-Serre varieties but several modifications are needed: Being an extension of the absolute Galois group, the group $E$ as in Theorem \ref{thmABC:main} is not finite in general. Hence the action on a complete intersection we consider here is not given by $E$ but an appropriate finite quotient $\Ee$, compare Lemma \ref{lem:profinext}. Furthermore, as in the example given by Harari and Stix, we consider a complete intersection not over the given field $k$ but an appropriate field extension $k'|k$. The action of $\Ee$ on this complete intersection is semilinear, compare \textsection\ref{subsec:semilinear}. Also note that in the construction of a Godeaux-Serre variety over an algebraically closed field, we need to find a linear subspace of a certain projective space in general position. This is not always possible if the ground field is finite, so that a version of Bertini's theorem for finite fields is needed, see for instance \cite{poonen02bertinitheorems}.

\smallskip

This paper is organized as follows: Section \ref{sec:basic} provides basic facts about admissible group actions on schemes in the sense of \cite[V \textsection1]{SGA1}. In particular, we are interested in those without fixed points as well as semilinear actions in connection with the \'etale fundamental groups. The construction of our varieties is given in Section \ref{sec:main}. Here we consider an extension of a finite Galois group instead of the given extension of the absolute Galois group, which is possible due to Lemma \ref{lem:profinext}. Based on this extension, we construct a $k$-form of a Godeaux-Serre variety as done in \textsection\ref{subsec:constr}, and derive the main result in \textsection\ref{subsec:mainresult}.

\medskip

\paragraph{\bf Notation and terminology}
Throughout this paper, $\Omega$ will always denote an algebraically closed field. By a \textbf{group extension} of a group $G$ by a group $H$, we mean a group $E$ fitting into an exact sequence
\[
1 \longto H \longto E \longto G \longto 1.
\]
A group action on a scheme will always be from the right. Hence an action of a group $G$ on a scheme $X$ is induced by a group homomorphism $G^\op\to\Aut(X)$. The automorphism on $X$ induced by an element $g\in G$ in this way will be denoted by $\rho_g$.

\smallskip

The category of finite sets will be denoted by $\sets$, and the one of finite \'etale coverings of a connected scheme $X$ by $\fet_X$. The \textbf{fiber functor} at the geometric point $\x\in X(\Omega)$ is given by
\[
F_\x:\fet_X\longto\sets, \quad Y\longmapsto F_\x(Y) := \Hom_X(\Spec\Omega,Y).
\]
For a morphism of connected schemes $\phi:Y\to Z$ and geometric point $\y\in Y(\Omega)$, the induced homomorphism between the fundamental groups will be denoted by $\phi_\ast:\pi_1(Y,\y)\to\pi_1(Z,\phi(\y))$.

\smallskip

A fixed algebraic closure of a field $k$ will be denoted by $\k$ and the separable closure inside $\k$ by $k_s$. The \textbf{Galois group} of a Galois extension $k'|k$ will be denoted by $\Gal(k'|k)$ and the \textbf{absolute Galois group} of $k$ by $\Gal_k:=\Gal(k_s|k)$.

\smallskip

Finally, if $X$ is a scheme over $k$ and $A$ is a $k$-algebra, the fiber product $X\times_{\Spec k}\Spec A$ will be denoted by $X\otimes_kA$ or $X_A$ if the ground field $k$ is clear from the context. The base change of a morphism $\phi:X\to Y$ between $k$-schemes will be denoted by $\phi_A:X_A\to Y_A$.


\section{Admissible semilinear actions and Fundamental groups} \label{sec:basic}


\subsection{Admissible group actions}
In what follows, let $G$ be a finite group and $X$ be a scheme of finite type over a fixed locally noetherian base scheme $S$. Recall that a group action of $G$ on $X$ is \textbf{admissible} if the categorical quotient $X/G$ exists and the quotient morphism $X\to X/G$ is affine. We are particularly interested in an admissible group action \textbf{without fixed points}, i.e.~an action of $G$ on $X$ such that $\x g\neq\x$ for all $g\in G\setminus\{1\}$ and geometric points $\x\in X(\Omega)$.

\begin{prop}\label{prop:quotmapetale}
Suppose that $X$ ist connected and $G$ acts on $X$ as an $S$-scheme admissibly without fixed points. Then the following holds:
\begin{enumerate}
\item The quotient morphism $p:X\to X/G$ is a finite \'etale Galois covering.
\item Let $\x\in X(\Omega)$ and $\z$ be its image on $X/G$ under $p$. Then the mapping
$$\Phi=\Phi_{G,\x}:\pi_1(X/G,\z)\longto G, \quad \alpha\longmapsto g_\alpha \quad \text{if} \quad \alpha_X(\x)=\x g_\alpha\in F_\z(X)$$
is a well-defined continuous surjective group homomorphism.
\item We have the following exact sequence: \vspace{-1ex}
\[
1\longto\pi_1(X,\x)\longto\pi_1(X/G,\z) \stackrel{\Phi}{\longto} G\longto 1.
\]
\end{enumerate}
\end{prop}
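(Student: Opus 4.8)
The plan is to reduce the whole statement to the Galois theory of the \'etale fundamental group, once assertion~(1) is in hand.

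For~(1), I would first observe that, the action being admissible, the three properties in question (finite, \'etale, and the torsor/Galois condition) are local on $X/G$, so one may assume $X=\Spec A$ and $X/G=\Spec A^G$. Each $a\in A$ is a root of $\prod_{g\in G}\bigl(T-(ag)\bigr)\in A^G[T]$, so $A$ is integral, hence finite, over $A^G$; thus $p$ is finite, and it is surjective by lying over, so $X/G$ is connected as the image of the connected scheme $X$. The no-fixed-point hypothesis is exactly what forces the canonical morphism $\coprod_{g\in G}X\to X\times_{X/G}X$, which restricts to $(\id_X,\rho_g)$ on the copy of $X$ indexed by $g$, to be an isomorphism; this is carried out in \cite[V]{SGA1}. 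Equivalently $p$ is a $G$-torsor for the \'etale topology, which, together with finiteness, is precisely the statement that $p$ is a finite \'etale Galois covering with group $G$.

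For~(2), part~(1) makes $F_\z(X)$ a set carrying a left action of $\pi_1(X/G,\z)$ by monodromy and a right action of $G$ via $s\mapsto\rho_g\circ s$; these commute, since $\rho_g$ is a morphism in $\fet_{X/G}$ and every element of $\pi_1(X/G,\z)=\Aut(F_\z)$ is a natural automorphism, and the torsor property of $p$ makes the $G$-action simply transitive. Fixing $\x\in F_\z(X)$ over $\z$, there is then for each $\alpha$ a unique $g_\alpha\in G$ with $\alpha_X(\x)=\x g_\alpha$, so $\Phi$ is well defined, and
\[
(\alpha\beta)_X(\x)=\alpha_X\bigl(\beta_X(\x)\bigr)=\alpha_X(\x g_\beta)=\alpha_X(\x)\,g_\beta=\x g_\alpha g_\beta
\]
shows that $\Phi$ is a homomorphism. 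It is continuous, being the composite of the orbit map $\alpha\mapsto\alpha_X(\x)$ into the finite set $F_\z(X)$ — continuous because $\pi_1$ acts continuously on finite fibers — with the bijection $F_\z(X)\xrightarrow{\ \sim\ }G$, $\x g\mapsto g$; and it is surjective because $X$ is connected, so $\pi_1(X/G,\z)$ already acts transitively on $F_\z(X)=\{\x g:g\in G\}$.

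For~(3), by construction $\ker\Phi$ is the stabilizer of $\x$ for the $\pi_1(X/G,\z)$-action on $F_\z(X)$, and the fundamental theorem of the Galois category (see \cite[V]{SGA1}) identifies this stabilizer with the image of the injective homomorphism $p_\ast\colon\pi_1(X,\x)\to\pi_1(X/G,\z)$; combined with the surjectivity of $\Phi$ this is exactly the asserted exact sequence. The only genuinely non-formal ingredient is the implication ``no fixed points $\Rightarrow$ $p$ \'etale'' in~(1): a priori the hypothesis controls only $\Omega$-valued points, and one must exclude infinitesimal ramification, which is precisely what the isomorphism $\coprod_{g\in G}X\xrightarrow{\ \sim\ }X\times_{X/G}X$ encodes; everything downstream of that is bookkeeping in the Galois category of $X/G$.
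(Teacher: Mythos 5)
Your proof is correct and follows essentially the same route as the paper's: part (1) via the SGA1 V criterion that a free admissible action makes $X\to X/G$ a principal (Galois) covering, part (2) via the monodromy action on the fiber $F_\z(X)$ together with the right $G$-action, and part (3) via the identification of the image of the injective map $\pi_1(X,\x)\to\pi_1(X/G,\z)$ with the stabilizer of $\x$, i.e.\ $\ker\Phi$ (the paper cites \cite[V Prop.6.13]{SGA1} for exactly this). The only differences are cosmetic: you spell out the affine reduction and the homomorphism computation that the paper delegates to the canonical isomorphism $G\cong\Aut(X|(X/G))^\op$ and to Bourbaki for transitivity.
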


\begin{proof}
Observe that $p:X\to X/G$ is finite and $X/G$ is of finite type over $S$ by \cite[Cor.1.5]{SGA1}. By \cite[Cor.2.4]{SGA1}, $p$ is \'etale and $G$ is canonically isomorphic to $\Aut(X|(X/G))^\op$. Furthermore, \cite[V \textsection2.2 Thm.2]{bouac} shows that $G$ acts on $F_\z(X)$ transitively. Hence $p$ is a Galois covering, which proves (1). Assertion (2) holds since the projection $\pi_1(X/G,\z)\to\Aut(X|(X/G))^\op$ given by the geometric point $\x\in X(\Omega)$ is a continuous surjective group homomorphism.

\smallskip

We now come to (3). By \cite[V Prop.6.13]{SGA1}, the map $\pi_1(X,\x)\to\pi_1(X/G,\z)$ is injective and its image is the subgroup of those $\alpha\in\pi_1(X/G,\z)$ such that $\alpha_X(\x)=\x$, i.e.~exactly the kernel of $\Phi$. Hence the whole sequence is exact.
\end{proof}

The next proposition is about the functoriality between schemes with admissible group actions without fixed points and the group homomorphism $\Phi_{G,\x}$ from Proposition \ref{prop:quotmapetale}.

\begin{prop} \label{prop:equivariant}
Let $G$, $H$ be finite groups acting admissibly without fixed points on connected $S$-schemes $Y$, $Z$ of finite type with quotient maps $p_G:Y\to Y/G$ and $p_H:Z\to Z/H$ respectively. Let $f:G\to H$ be a group homomorphism, $\phi:Y\to Z$ an $f$-equivariant morphism, i.e.
\[
\phi\circ\rho_g = \rho_{f(g)}\circ\phi \quad \text{for all} \quad g\in G,
\]
and $\bar\phi:Y/G\to Z/H$ the morphism induced by $\phi$. Then for $\y\in Y(\Omega)$, $\z:=\phi(\y)\in Z(\Omega)$, $\y':=p_G(\y)\in(Y/G)(\Omega)$ and $\z':=p_H(\z)\in(Z/H)(\Omega)$, the following diagram is commutative:
\begin{center}
\begin{tikzpicture}[description/.style={fill=white,inner sep=2pt}]
\matrix (m) [matrix of math nodes, row sep=2.5em, column sep=2.5em, text height=1.5ex, text depth=0.25ex]
{ \pi_1(Y/G,\y') & \pi_1(Z/H,\z') \\ G & H\rlap{.} \\ };
\path[->,font=\scriptsize] (m-1-1) edge node[auto] {$ \overline\phi_{\ast} $} (m-1-2) edge node[auto] {$\Phi_{G,\y}$} (m-2-1) (m-1-2) edge node[auto] {$\Phi_{H,\z}$} (m-2-2) (m-2-1) edge node[auto] {$f$} (m-2-2);
\end{tikzpicture}
\end{center}
\end{prop}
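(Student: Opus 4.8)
The plan is to deduce the commutativity from the naturality of the fibre functor, using Proposition \ref{prop:quotmapetale}(1) applied to $p_H$ to cancel group elements at the very end. Because the square relating $\phi$, $p_G$, $p_H$ and $\bar\phi$ commutes (this is what ``$\bar\phi$ is induced by $\phi$'' means), the pair $(\phi,p_G)$ defines, by the universal property of the fibre product, a morphism $\psi\colon Y\to Z\times_{Z/H}(Y/G)$ over $Y/G$. Both $Y$ and $Z\times_{Z/H}(Y/G)$ are finite \'etale over $Y/G$ (for the latter by stability of finite \'etale morphisms under base change), so $\psi$ is a morphism in $\fet_{Y/G}$ and hence $\psi_\ast\colon F_{\y'}(Y)\to F_{\y'}(Z\times_{Z/H}(Y/G))$ is equivariant for the $\pi_1(Y/G,\y')$-action. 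Now an $\Omega$-point of $Z\times_{Z/H}(Y/G)$ lying over $\y'$ is the same datum as an $\Omega$-point of $Z$ lying over $\bar\phi(\y')=\z'$, so $F_{\y'}(Z\times_{Z/H}(Y/G))$ is canonically $F_{\z'}(Z)$; under this identification $\psi_\ast$ becomes $t\mapsto\phi(t)$, and, by the very definition of $\bar\phi_\ast$ as the homomorphism induced by the pullback functor $\bar\phi^\ast\colon\fet_{Z/H}\to\fet_{Y/G}$, the resulting $\pi_1(Y/G,\y')$-action on $F_{\z'}(Z)$ is the action obtained by composing $\bar\phi_\ast$ with the natural $\pi_1(Z/H,\z')$-action.

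Next I would test this equivariance on the base point $\y\in F_{\y'}(Y)$. Fix $\alpha\in\pi_1(Y/G,\y')$ and set $g_\alpha:=\Phi_{G,\y}(\alpha)$, so that $\alpha_Y(\y)=\y g_\alpha$ by definition of $\Phi_{G,\y}$. Applying $\phi$ and using the $f$-equivariance $\phi\circ\rho_g=\rho_{f(g)}\circ\phi$ gives
\[
\phi(\alpha_Y(\y))=\phi(\y g_\alpha)=\phi(\y)\,f(g_\alpha)=\z\,f(g_\alpha).
\]
On the other hand, equivariance of $\psi_\ast$ together with the identifications of the previous paragraph yields $\phi(\alpha_Y(\y))=(\bar\phi_\ast\alpha)_Z(\z)=\z\,\Phi_{H,\z}(\bar\phi_\ast\alpha)$, the last equality being the definition of $\Phi_{H,\z}$. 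Hence $\z\,f(g_\alpha)=\z\,\Phi_{H,\z}(\bar\phi_\ast\alpha)$ in $F_{\z'}(Z)$. By Proposition \ref{prop:quotmapetale}(1) applied to $Z$ and $H$, the covering $p_H\colon Z\to Z/H$ is Galois with group $H$, so $H$ acts simply transitively on $F_{\z'}(Z)$; cancelling $\z$ gives $f(\Phi_{G,\y}(\alpha))=\Phi_{H,\z}(\bar\phi_\ast\alpha)$, which is exactly the asserted commutativity.

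The only genuinely delicate point is the compatibility invoked in the first paragraph: one must check carefully that the canonical isomorphism $F_{\y'}\circ\bar\phi^\ast\cong F_{\z'}$ used to define $\bar\phi_\ast$ really coincides with the set-theoretic identification of $F_{\y'}(Z\times_{Z/H}(Y/G))$ with $F_{\z'}(Z)$, and that the right-action conventions for $G$ and $H$ (the automorphisms $\rho_g$ versus the notation $\x g$) are applied consistently on both sides, so that the equality ``$\z\,f(g_\alpha)=\z\,\Phi_{H,\z}(\bar\phi_\ast\alpha)$'' is not off by an inversion. Once this bookkeeping is pinned down, the argument is entirely formal.
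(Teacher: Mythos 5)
Your argument is correct and is essentially the paper's own proof: the paper likewise forms the fibre product $Z\times_{Z/H}(Y/G)$, uses the morphism $(\phi,p_G)$ in $\fet_{Y/G}$ to transport the $\pi_1(Y/G,\y')$-action, and evaluates on the base point $\y$ using $f$-equivariance, with the freeness of the $H$-action on the fibre (equivalently, well-definedness of $\Phi_{H,\z}$) supplying the final cancellation. Your extra remarks on the fibre identification and right-action conventions are just the bookkeeping the paper leaves implicit.
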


\begin{proof}
Let $\alpha\in\pi_1(Y/G,\y')$ and $g:=\Phi_{G,\y}(\alpha)$, i.e.~$\alpha_Y(\y) = \rho_g(\y)$. Let $Y_0:=Z\times_{Z/H}(Y/G)$ with canonical projections $\pr_1:Y_0\to Z$ and $\pr_2:Y_0\to Y/G$. Note that $\pr_2\in\fet_{Y/G}$. Furthermore, let $\phi_0:=(\phi,p_G):Y\to Y_0$ and $\y_0:=\phi_0(\y)$. Since $\pr_1(\y_0) = \phi(\y) = \z$, we have
\[
(\phi_\ast\alpha)_Z(\z) = \pr_1(\alpha_{Y_0}(\y_0)) = \pr_1(\phi_0(\alpha_Y(\y))) = \phi(\alpha_Y(\y)) = \phi(\rho_g(\y)) = \rho_{f(g)}(\phi(\y)) = \rho_{f(g)}(\z).
\]
This implies that $\Phi_{H,\z}(\phi_\ast\alpha) = f(g) = f(\Phi_{G,\y}(\alpha))$. Hence $\Phi_{H,\z}\circ\phi_\ast = f\circ\Phi_{G,\y}$ as desired.
\end{proof}


\subsection{Semilinear actions} \label{subsec:semilinear}
Given a Galois field extension $k'|k$, we are interested in group actions on schemes over $k'$ given by automorphisms which may not be defined over $k'$, but are in some sense compatible with $k$-automorphisms of $k'$. This leads to the notion of a semilinear action.

\begin{defi} \label{defi:semilinear}
Let $k'|k$ be a Galois field extension, $Y$ a scheme over $k'$ and $\pi:E\to\Gal(k'|k)$ a group homomorphism. A group action of $E$ on $Y$ is said to be \textbf{semilinear with respect to $\pi$} or \textbf{$\pi$-semilinear} if for each $g\in E$, the diagram
\[
\begin{tikzpicture}[description/.style={fill=white,inner sep=2pt}]
\matrix (m) [matrix of math nodes, row sep=2em, column sep=2.5em, text height=1.5ex, text depth=0.25ex]
{ Y & Y \\ \Spec{k'} & \Spec{k'} \\ };
\path[->,font=\scriptsize] (m-1-1) edge node[auto] {$ \rho_g $} (m-1-2) edge (m-2-1) (m-1-2) edge (m-2-2) (m-2-1) edge node[auto] {$ \pi(g)^\ast $} (m-2-2);
\end{tikzpicture}
\]
is commutative, where $\pi(g)^\ast:\Spec{k'}\to\Spec{k'}$ denotes the morphism induced by $\pi(g)$.
\end{defi}

\begin{rmk} \label{rmk:fixedpts-semilinear}
It is easy to check that in the situation of Definition \ref{defi:semilinear}, a geometric point on $Y$ can be fixed by $g\in E$ only if $g\in\ker\pi$.
\end{rmk}

\begin{prop}\label{prop:semilinact}
Let $k'|k$ be a finite Galois extension and
\begin{equation*}
1 \longto G \longto E \xrightarrow{~\pi~} \Gal(k'|k) \longto 1
\end{equation*}
an exact sequence of finite groups. Furthermore, let $\psi:Y\to\Spec k'$ be a connected scheme of finite type over $k'$ with an admissible $\pi$-semilinear action of $E$ and quotient $X:=Y/E$. Then the morphisms $Y/G\to X$ and $Y/G\to\Spec{k'}$ induce a $\Gal(k'|k)$-equivariant canonical isomorphism
\[
Y/G\cong X\otimes_kk'.
\]
\end{prop}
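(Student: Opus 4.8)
The plan is to realise the asserted isomorphism as a comparison morphism between two \'etale $\Gamma$-torsors over $X$, where $\Gamma:=E/G\cong\Gal(k'|k)$ (via $\pi$), and then to use that any $\Gamma$-equivariant morphism of $\Gamma$-torsors over a common base is automatically an isomorphism. First I would record the structure. Since $G\le E$ and the $E$-action on $Y$ is admissible, $Y$ is covered by $E$-stable affine opens $\Spec B$ --- pull back affine opens of $Y/E$ along the affine map $Y\to Y/E$ --- and these are in particular $G$-stable; hence the $G$-action is admissible, $Y/G$ is obtained by gluing the $\Spec(B^G)$, the map $q\colon Y\to Y/G$ is affine and surjective, and $Y/G$ is connected (being the image of $Y$). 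Because $G=\ker\pi$, Definition~\ref{defi:semilinear} forces the $G$-action to be $k'$-linear, so the structure morphism $\psi\colon Y\to\Spec k'$ descends to $\bar\psi\colon Y/G\to\Spec k'$; moreover the $E$-invariant (hence $G$-invariant) map $Y\to Y/E=X$ descends to $\bar q\colon Y/G\to X$; and, $\bar q$ and $\bar\psi$ being compatible over $\Spec k$, they assemble into the canonical morphism $\theta\colon Y/G\to X\otimes_k k'$ of the statement. The residual action of $\Gamma=E/G$ on $Y/G$ is admissible with $(Y/G)/\Gamma=Y/E=X$ (compute on the $\Spec(B^G)$'s, using $(B^G)^\Gamma=B^E$), $\bar q$ is the corresponding quotient map, and $\theta$ is $\Gamma$-equivariant for the action of $\Gamma$ on $X\otimes_k k'$ through the factor $k'$, since $\bar q$ is the $\Gamma$-invariant projection and $\bar\psi$ is $\Gamma$-equivariant by the $\pi$-semilinearity of the $E$-action on $Y$.

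Next I would show that $\bar q\colon Y/G\to X$ and the projection $X\otimes_k k'\to X$ are both \'etale $\Gamma$-torsors. For the projection this is immediate: $\Spec k'\to\Spec k$ is a $\Gamma$-torsor because $k'|k$ is finite Galois with group $\Gamma$, and $X\otimes_k k'\to X$ is its base change. For $\bar q$, the residual $\Gamma$-action on the connected scheme $Y/G$ is admissible (just noted) and without fixed points: if some $\bar g\in\Gamma$ fixed a geometric point of $Y/G$, then lifting it through the surjection $q\colon Y\to Y/G$ to a geometric point $\y$ of $Y$ --- and using that geometric fibres of a quotient by a finite group are single orbits --- one gets $\y g=\y h$ for some $h\in G$, so $gh^{-1}$ fixes $\y$, so $gh^{-1}\in\ker\pi=G$ by Remark~\ref{rmk:fixedpts-semilinear}, whence $\bar g=1$. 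Thus Proposition~\ref{prop:quotmapetale}(1) applies to the $\Gamma$-action on $Y/G$ and $\bar q$ is a finite \'etale Galois covering with group $\Gamma$, i.e.\ an \'etale $\Gamma$-torsor. Finally, \'etale-locally on $X$ both torsors trivialise, so $\theta$ becomes a $\Gamma$-equivariant endomorphism of the trivial torsor, hence translation by a locally constant element of $\Gamma$ and in particular an isomorphism; this shows $\theta$ is an isomorphism, and its $\Gamma$-equivariance is exactly the claimed $\Gal(k'|k)$-equivariance.

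I expect the only delicate point to be the bookkeeping around the quotients: that a subgroup of an admissibly-acting finite group still acts admissibly with $(\Spec B)/G=\Spec(B^G)$ and $(Y/G)/\Gamma=Y/E$, that $\bar q$ is affine, that geometric fibres of such quotient maps are single orbits, and --- the one genuinely substantive step --- the fixed-point-freeness of the residual $\Gamma$-action on $Y/G$, which is exactly where Remark~\ref{rmk:fixedpts-semilinear} enters. One also has to keep track of the left/right conventions, since the action is on the right while $\pi(g)^\ast$ is contravariant in $\pi(g)$. An alternative that sidesteps Proposition~\ref{prop:quotmapetale} altogether is to carry out the above affine reduction and then invoke Galois descent for algebras directly: on each $\Spec B$, the $k'$-algebra $B^G$ carries a semilinear $\Gamma$-action with invariant ring $B^E$, hence the natural map $B^E\otimes_k k'\to B^G$ is a $\Gamma$-equivariant isomorphism, and these glue to $\theta$.
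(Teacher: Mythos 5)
Your argument is correct and follows essentially the same route as the paper: both identify $Y/G\to X$ and $X\otimes_k k'\to X$ as finite \'etale Galois coverings with group $\Gal(k'|k)$ (the former via Proposition~\ref{prop:quotmapetale} applied to the fixed-point-free residual action of $E/G$ on $Y/G$) and then conclude that the $\Gal(k'|k)$-equivariant comparison map is an isomorphism --- the paper by comparing cardinalities of geometric fibres inside the Galois category $\fet_X$, you by the equivalent observation that an equivariant map of \'etale torsors is locally a translation. Your explicit verification that the residual action on $Y/G$ has no fixed points (lifting through $Y\to Y/G$, using that its geometric fibres are single $G$-orbits, and invoking Remark~\ref{rmk:fixedpts-semilinear}) supplies a detail the paper only asserts.
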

\begin{proof}
Observe first that both $Y/G\to X$ and $X\otimes_kk'\to X$ are finite \'etale coverings. Indeed, $Y/G$ is of finite type over $k$ by \cite[V Prop.1.5]{SGA1}. Since $E/G\cong\Gal(k'|k)$ acts on $Y/G$ without fixed points, the quotient morphism $Y/G\to(Y/G)/\Gal(k'|k)=X$ is a finite \'etale covering by Proposition \ref{prop:quotmapetale}. On the other hand, $X\otimes_kk'\to X$ is obtained by base change from $\Spec{k'}\to\Spec{k}$, hence also a finite \'etale covering.

\smallskip

It is easily seen that the morphism $\psi':Y\to X\otimes_kk'$ obtained by the morphisms $Y/G\to X$ and $Y/G\to\Spec{k'}$ is $\Gal(k'|k)$-equivariant. Now fix a geometric point $\x\in X(\Omega)$ with fiber functor $F_\x$. Since $F_\x(\psi):F_\x(Y/G)\to F_\x (X\otimes_kk')$ is also $\Gal(k'|k)$-equivariant and $\Gal(k'|k)$ acts on the fiber $F_\x(X\otimes_kk')$ transitively, $F_\x(\psi)$ is surjective. Furthermore, the same argument as in Proposition \ref{prop:quotmapetale} shows that both fibers $F_\x(Y/G)$ and $F_\x(X\otimes_kk')$ have the same cardinality as $\Gal(k'|k)$. Hence $F_\x(\psi)$ is bijective. But $F_\x$ is a fiber functor of the Galois category $\fet_X$. Therefore, $\psi:Y/G\to X\otimes_kk'$ is an isomorphism as desired.
\end{proof}

\begin{prop}\label{fundgrpgal}
Let $k'|k$, $G$, $E$, $\pi$, $\psi:Y\to\Spec k'$ be as in Proposition \ref{prop:semilinact} and $X:=Y/E$ with structure morphism $\phi:X\to\Spec{k}$. Suppose that $E$ acts on $Y$ without fixed points. Fix a geometric point $\y\in Y(\Omega)$ with its image $\x\in X(\Omega)$ under the quotient map $Y\to X$. Then the diagram
\[
\begin{tikzpicture}[description/.style={fill=white,inner sep=2pt}]
\matrix (m) [matrix of math nodes, row sep=2em, column sep=2.5em, text height=1.5ex, text depth=0.25ex]
{ \pi_1(X,\x) & \pi_1(\Spec{k},\phi(\x)) \\ E & \Gal(k'|k) \\ };
\path[->,font=\scriptsize] (m-1-1) edge node[auto] {$ \phi_\ast $} (m-1-2) edge node[auto] {$ \Phi $} (m-2-1) (m-1-2) edge node[auto] {$ \Psi $} (m-2-2) (m-2-1) edge node[auto] {$\pi$} (m-2-2);
\end{tikzpicture}
\]
is commutative, where $\Phi=\Phi_{E,\y}:\pi_1(X,\x)\to E$ is the group homomorphism from Proposition \ref{prop:quotmapetale} and $\Psi$ is defined by the projection $\pi_1(\Spec{k},\phi(\x)) \to \Aut(\Spec{k'}|\Spec{k})^\op$ given by the geometric point $\psi(\y)\in(\Spec{k'})(\Omega)$.
\end{prop}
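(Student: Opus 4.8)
The plan is to realize the square as a special case of Proposition~\ref{prop:equivariant}, applied to the $\pi$-equivariant morphism $\psi\colon Y\to\Spec{k'}$. As base I would take $S=\Spec k$; since $k'|k$ is finite, both $Y$ and $\Spec{k'}$ are of finite type over $S$. The group $\Gal(k'|k)$ acts on $\Spec{k'}$ over $S$ in the usual way, and $E$ acts on $Y$ over $S$ because $Y\to\Spec k$ is the composite $Y\xrightarrow{\psi}\Spec{k'}\to\Spec k$, which is $E$-invariant: $\pi$-semilinearity gives $\psi\circ\rho_g=\pi(g)^\ast\circ\psi$ and $\pi(g)^\ast$ is a $\Spec k$-morphism.

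Next I would check that $\Gal(k'|k)$ acts admissibly without fixed points on $\Spec{k'}$: the categorical quotient is $\Spec k$ and the quotient morphism $\Spec{k'}\to\Spec k$ is finite, hence affine, so the action is admissible; and for $\sigma\ne1$ a geometric point of $\Spec{k'}$, i.e.\ an embedding $k'\hookrightarrow\Omega$, is moved by $\sigma$, as in Remark~\ref{rmk:fixedpts-semilinear}. Together with the hypotheses already in force ($E$ acts on $Y$ admissibly by Proposition~\ref{prop:semilinact} and without fixed points by assumption), this puts us in the situation of Proposition~\ref{prop:equivariant} with $(G,H)=(E,\Gal(k'|k))$, $(Y,Z)=(Y,\Spec{k'})$, $f=\pi$ and $\phi=\psi$; note that $\psi$ is $f$-equivariant precisely by Definition~\ref{defi:semilinear}, once one identifies $\pi(g)^\ast$ with $\rho_{\pi(g)}$ for the above action. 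On passing to quotients, the induced morphism $\bar\psi\colon Y/E\to(\Spec{k'})/\Gal(k'|k)$ becomes, under the canonical identification $(\Spec{k'})/\Gal(k'|k)=\Spec k$, the structure morphism $\phi\colon X\to\Spec k$, since $\phi$ is the unique factorization of the $E$-invariant morphism $Y\to\Spec{k'}\to\Spec k$ through $X=Y/E$. The geometric points match as well: in the notation of Proposition~\ref{prop:equivariant}, $\y'=\x$, the point there called $\z$ is $\psi(\y)$, and $\z'$ is its image in $\Spec k$, namely $\phi(\x)$.

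Proposition~\ref{prop:equivariant} then gives $\Phi_{\Gal(k'|k),\psi(\y)}\circ\phi_\ast=\pi\circ\Phi_{E,\y}$. Since $\Phi=\Phi_{E,\y}$ by definition, the proof is finished once $\Phi_{\Gal(k'|k),\psi(\y)}$ is identified with $\Psi$; but by Proposition~\ref{prop:quotmapetale}(2) the former is, by construction, the homomorphism $\pi_1(\Spec k,\phi(\x))\to\Aut(\Spec{k'}\,|\,\Spec k)^\op\cong\Gal(k'|k)$ attached to the geometric point $\psi(\y)$, which is exactly how $\Psi$ is defined.

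There is no deep obstacle here; the content is entirely the translation into the framework of Proposition~\ref{prop:equivariant}. The only thing requiring genuine care is the bookkeeping of the canonical identifications involved — that $\pi(g)^\ast=\rho_{\pi(g)}$, that $(\Spec{k'})/\Gal(k'|k)=\Spec k$ with $\bar\psi=\phi$, and above all that the anti-isomorphism $\Aut(\Spec{k'}\,|\,\Spec k)^\op\cong\Gal(k'|k)$ coming from the contravariance of $\Spec$ is used consistently in the definition of $\Psi$ and in Proposition~\ref{prop:quotmapetale}(2), so that the two homomorphisms coincide on the nose and not merely up to an inverse.
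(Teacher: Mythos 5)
Your proposal is correct and follows exactly the paper's route: the paper proves this proposition in one line by invoking Proposition~\ref{prop:equivariant} with the $\pi$-equivariant morphism $\psi\colon Y\to\Spec{k'}$. You have simply spelled out the hypothesis-checking and the canonical identifications (quotient $=\Spec k$, $\Phi_{\Gal(k'|k),\psi(\y)}=\Psi$) that the paper leaves implicit.
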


\begin{proof}
This follows from Proposition \ref{prop:equivariant} since $\psi:Y\to\Spec{k'}$ is $\pi$-equivariant by the definition of a $\pi$-semilinear action.
\end{proof}


\section{Godeaux-Serre varieties and \mbox{their $k$-forms}} \label{sec:main}


\subsection{A construction} \label{subsec:constr}
We begin with a construction for a given finite group extension of a finite Galois group.
\begin{prop}\label{prop:constr}
Given $r\in\bN$ with $r\geq2$, a finite Galois extension $k'|k$ in a fixed algebraic closure $\bar k$ and an extension of finite groups
\begin{equation*}
1 \longto G \xrightarrow{~\iota~} \Ee \xrightarrow{~\pi~} \Gal(k'|k) \longto 1,
\end{equation*}
there exists a smooth projective geometrically connected variety over $k'$ of dimension $r$ which is a complete intersection in $\bP_{k'}^n$ for some $n\in\bN$ and on which $\tilde E$ acts $\pi$-semilinearly and admissibly without fixed points.
\end{prop}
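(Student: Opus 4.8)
The plan is to mimic Serre's construction of a Godeaux-Serre variety, but carried out $\tilde E$-equivariantly over $k'$ and with the semilinearity built in from the start. First I would produce a faithful action of $\tilde E$ on a finite-dimensional $k$-vector space that is \emph{semilinear} over $k'$, i.e.\ a $k$-vector space $W$ with a $k$-linear action of $\tilde E$ together with a $k'$-vector space structure on $W$ such that $\rho_g(\lambda w)=\pi(g)(\lambda)\rho_g(w)$ for $g\in\tilde E$, $\lambda\in k'$, $w\in W$. The natural candidate is $W=k'[\tilde E]$, the group algebra, with $\tilde E$ acting by right translation and $k'$ acting on the left via the homomorphism $k'\to k'[\tilde E]$; one checks right translation is semilinear for this $k'$-structure because left multiplication by $\lambda\in k'$ and right translation by $g$ interact through $\pi(g)$ on the scalar. (Some care is needed: one wants $\tilde E$ to act freely on $\bP(W\otimes_{k'}\bar k)$ away from a suitable locus; replacing $W$ by a direct sum of several copies of $k'[\tilde E]$, or by $k'[\tilde E]^{\oplus m}$ for $m$ large, gives enough room, exactly as in \cite{serre58}.) This produces a $\pi$-semilinear $\tilde E$-action on $\bP_{k'}^N=\bP(W)$ whose induced action on $\bP_{\bar k}^N$ has fixed locus contained in the subvariety cut out by the condition that a point lies in the $\ker\pi$-fixed locus, which by Remark \ref{rmk:fixedpts-semilinear} is the only place fixed points can occur; choosing $W$ so that $G=\ker\pi$ acts on $\bP(W\otimes\bar k)$ without fixed points (possible since $G$ is finite and we may take the regular representation with multiplicity) makes the whole $\tilde E$-action fixed-point-free on all of $\bP_{\bar k}^N$.

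Next I would intersect with $\tilde E$-invariant hypersurfaces to cut the dimension down to $r$. Working over $\bar k$ for the geometric input: by an equivariant Bertini argument one finds $N-r$ hypersurfaces $H_1,\dots,H_{N-r}$ in $\bP^N$, each defined over $k'$ and invariant under $\tilde E$ (invariance in the semilinear sense: $\rho_g$ carries $H_i$ to itself, which amounts to the defining equation being fixed by $\tilde E$ up to the scalar twist), such that $Y:=H_1\cap\cdots\cap H_{N-r}$ is smooth of dimension $r$ and geometrically connected. Concretely, one takes the $H_i$ to be high-degree hypersurfaces coming from $\tilde E$-invariant sections of $\mathcal O_{\bP^N}(d)^{\tilde E}$ for $d\gg0$ — equivalently, pullbacks along $\bP^N\to\bP^N/\tilde E$ of general hypersurfaces downstairs — so that smoothness and connectedness of the intersection follow from a Bertini theorem applied on the quotient, with the degrees large enough that the very ample linear system used is base-point free and separates enough jets. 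When $k$ (hence $k'$) is finite the classical Bertini theorem over infinite fields does not apply, and here one invokes Poonen's Bertini theorem over finite fields \cite{poonen02bertinitheorems} to guarantee a smooth member of the relevant linear system. Since each $H_i$ is $\tilde E$-invariant and $\tilde E$ already acts without fixed points on $\bP_{\bar k}^N$, it acts without fixed points on $Y_{\bar k}$, and admissibility of the action on the quasi-projective (in fact projective) scheme $Y$ is automatic because $Y$ is projective and the group is finite, so the quotient exists and the quotient map is affine (indeed finite); cf.\ the remarks preceding Proposition \ref{prop:quotmapetale}.

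Finally, one descends and records: $Y$ is defined over $k'$, is a complete intersection in $\bP_{k'}^n$ with $n=N$, has dimension $r\geq2$ (so that a complete intersection of dimension $r$ is automatically geometrically connected by the connectedness of complete intersections of positive dimension), is smooth and projective, and carries the $\pi$-semilinear admissible fixed-point-free $\tilde E$-action constructed above. This is exactly the assertion.

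The main obstacle I anticipate is the equivariant Bertini step in the case of a finite ground field: one must simultaneously (i) keep the hypersurfaces $\tilde E$-invariant in the semilinear sense, (ii) get smoothness of the intersection, and (iii) get geometric connectedness, all without the luxury of a generic choice over an infinite field. The clean way around this is to do everything on the quotient $\bP^N/\tilde E$ (a projective variety over $k$), apply Poonen's theorem there to find a smooth complete-intersection-type subvariety, and pull back — but one must check that the pullback is again a \emph{complete intersection} in the original $\bP^N$ and that smoothness is preserved under the étale (because fixed-point-free) quotient map, and that the degrees can be chosen so the pulled-back equations indeed cut out a scheme-theoretic complete intersection. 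This bookkeeping, rather than any deep new idea, is where the real work lies; the rest is a routine equivariant upgrade of \cite[Prop.\ 15]{serre58}.
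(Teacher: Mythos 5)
There is a genuine error at the heart of your first paragraph: you claim that $W$ can be chosen so that $G=\ker\pi$ acts on $\bP(W\otimes\bar k)$ \emph{without fixed points}, making the $\tilde E$-action fixed-point-free on all of $\bP_{\bar k}^N$. This is impossible for nontrivial $G$. Every $g\in G\setminus\{1\}$ acts on $\bP^N$ through a matrix, which has an eigenvector over $\bar k$, and the corresponding line is a fixed point of $g$ in $\bP^N(\bar k)$; taking the regular representation, or any number of copies of it, does not remove these eigenvectors. No linear (or $k'$-semilinear) action of a nontrivial finite group on a projective space is free. This is precisely why Serre's original construction, and the paper's proof, must pass to a complete intersection that \emph{avoids} the fixed locus $Q=\bigcup_{g\in G\setminus\{1\}}Q_g$, which is a nonempty proper closed subset. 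Because you believe the action is already free on all of $\bP^N$, your Bertini step imposes no condition forcing $Y$ to miss $Q$, and your conclusion that ``$\tilde E$ acts without fixed points on $Y_{\bar k}$'' does not follow; for a generic invariant complete intersection of dimension $r$ one must actively arrange $Y\cap Q=\emptyset$.

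Repairing this requires the two ingredients your proposal omits. First, a dimension count: one needs $r<n-\dim Q$, which the paper arranges by replacing $\tau$ with $m>r$ copies of itself (your ``multiplicity'' remark serves a different, and here irrelevant, purpose). Second, the Bertini recursion on the quotient $Z=\bP_{k'}^n/\tilde E$ must simultaneously keep each successive slice $Z_i$ smooth \emph{and} cut $\dim Q_i$ down by one at each step, so that after $n-r$ steps the image of the fixed locus has been eliminated entirely and the preimage $Y=p^{-1}(Z_{n-r})$ lies in the free locus $W$. Your remaining architecture — the semilinear action via the (twisted) regular representation, pulling back hypersurfaces from the quotient, invoking Poonen's theorem over finite fields, transferring smoothness through the étale quotient map on the free locus, and geometric connectedness of positive-dimensional complete intersections — matches the paper's proof and is fine, but without the avoidance of $Q$ the construction does not produce a fixed-point-free action and the proof fails.
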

\begin{proof}
We proceed in several steps.

\newpage

\newstep{Define a semilinear action of $\tilde{E}$ on $\bP_{k'}^n$ and consider its quotient}
Let $\tau:\Ee\to\GL_{n+1}(k)$ be a faithful linear representation such that $\tau(g)$ is not a multiple of the identity matrix for all $g\in\Ee\setminus\{1\}$ (for example, the regular representation). Define the semilinear action of $\Ee$ on the homogeneous coordinate ring $k'[T_0,\ldots,T_n]$ by
\begin{align*}
\Ee\times k'[T_0,\ldots,T_n]&\longto k'[T_0,\ldots,T_n],\\
(g,f(T_0,\ldots,T_n))&\longmapsto (gf)(T_0,\ldots,T_n) := \pi(g)\big(f\big((T_0,\ldots,T_n)\!\cdot\!\tau(g)\big)\big).
\end{align*}
Since this defines a left action of $\Ee$ on the graded ring $k'[T_0,\ldots,T_n]$, we obtain the right action of $\Ee$ on $\bP_{k'}^n = \Proj k'[T_0,\ldots,T_n]$. This action is clearly $\pi$-semilinear and admissible with quotient
\[
\bP_{k'}^n/\Ee \cong \Proj{A}, \quad \text{where} \quad A := k'[T_0,\ldots,T_n]^\Ee.
\]
Since $A$ is a finitely generated algebra over $k$ by \cite[V \textsection1.9 Thm.2]{bouac}, there exist $d,s\in\bN$ and $f_0,\ldots,f_s\in A_d$ such that $A^{(d)}=k[f_0,\ldots,f_s]$ by \cite[III \textsection1.3 Prop.3]{bouac}. Hence the quotient $\bP_{k}^n/\Ee$ is a projective variety $Z\subseteq\bP_k^s$. The quotient map will be denoted by $p:\bP_{k'}^n\to Z$.

\smallskip


\newstep{The closed subscheme of ``bad points'' and its complement in $\bP_{k'}^n$}
Observe that for each $g\in G\setminus\{1\}$, the difference kernel $Q_g:=\ker(\id,\rho_g)$ is a proper closed subscheme of $\bP_{k'}^n$ defined over $k$. The finite union
\[
Q := \bigcup_{g\in G\setminus\{1\}} Q_g \subset \bP_{k'}^n
\]
is a proper closed subset. Its image $Q_0:=p(Q)$ is a proper closed subset in $Z$ since $p$ is finite. Thus $p^{-1}(Q_0)$ is also a proper closed subset in $\bP_{k'}^n$. Hence $W:=\bP_{k'}^n\setminus p^{-1}(Q_0)$ is a dense open subscheme of $\bP_{k'}^n$ with an admissible $\Ee$-action without fixed points with quotient $Z_0:=Z\setminus Q_0$, a dense open subscheme of $Z$. Furthermore, since $p|_W:W\to Z_0$ is a finite \'etale covering by Proposition \ref{prop:quotmapetale} and $W$ is smooth over $k'$ and thus also over $k$, $Z_0$ is also smooth over $k$.

\smallskip


\newstep{Using Bertini}
Observe that we can assume without loss of generality that $r<n-\dim{Q}$. Otherwise we can consider a representation $\tilde\tau:E\to\GL_{\tilde n+1}(k)$, where $\tilde n := m(n+1)-1$, obtained by $m$ copies of $\tau$ for some $m\in\bN$. Indeed, the closed subscheme $\tilde{Q}$ of ``bad points'' obtained by $\tilde\tau$ has dimension $\dim{\tilde{Q}} = m(\dim{Q}+1)-1$ and $\tilde n - \dim{\tilde{Q}} = m(n-\dim{Q})$, i.e.~we can choose any $m>r$.

\smallskip

Starting with $Z_0$ and $Q_0$, we construct $Z_1,Q_1,\ldots,Z_{n-r},Q_{n-r}$ recursively as follows: For each $i=1,\ldots,n-r$, use Bertini's theorem to find a hypersurface $L_i\subseteq\bP_k^s$ given by a homogeneous polynomial $h_i\in k[U_0,\ldots,U_s]$ such that $Z_i:=L_i\cap Z_{i-1}$ is smooth over $k$ and for $Q_i:=Q_{i-1}\cap L_i$, we have $\dim Q_i \leq \dim Q_{i-1}-1$ (or $Q_i=\emptyset$ if $\dim Q_{i-1}=0$ or $Q_{i-1}=\emptyset$). Note that $h_i$ can be chosen to be linear if $k$ is infinite, but if $k$ is finite, one might need to choose $h_i$ defined over $k$ of higher degree, see \cite[Thm.1.2]{poonen02bertinitheorems}. An inductive argument shows that $Z_{n-r}$ is smooth over $k$ of dimension $r$ and $Q_{n-r}=\emptyset$. This implies that $Z_{n-r} = Z\cap L_1\cap\cdots\cap L_{n-r}$, i.e.~$Z':=Z_{n-r}$ is closed in $Z$ and smooth over $k$.

\smallskip


\newstep{The projective variety $Y$}

Consider the projective variety $Y\subseteq\bP_{k'}^n$ given by the polynomials $g_j:=h_j(f_0,\ldots,f_s)$ for $j=1,\ldots,n-r$. We are going to show that $Y$ is the variety we are looking for.

\smallskip

Observe first that $Y=p^{-1}(Z') \subseteq p^{-1}(Z_0)=W$. This implies that $\Yy:=Y\otimes_{k'}\k$ is contained in $W_{\k}$. Hence for each $y\in\Yy$ and $z:=p_\k(y)\in Z'_\k$, the ring homomorphism $\cO_{Z_\k,z}\to\cO_{\bP_\k^n,y}$ is \'etale. Furthermore, $\{h_1,\ldots,h_{n-r}\}$ is a subset of a regular parameter system of $\cO_{Z_\k,z}$ since $\cO_{Z'_\k,z}=\cO_{Z_\k,z}/(h_1,\ldots,h_{n-r})$ is regular of dimension $r$. Hence $\{g_1,\ldots,g_{n-r}\}$ is such a subset of $\cO_{\bP_\k^n,y}$, i.e.~$\cO_{\Yy,y} = \cO_{\bP_\k^n,y}/(g_1,\ldots,g_{n-r})$ is regular of dimension $r$. Therefore, $Y$ is smooth over $k'$ and has dimension $r$. In particular, $Y$ is a complete intersection in $\bP_{k'}^n$. It is geometrically connected since $\Yy$ is again a complete intersection in $\bP_\k^n$.

\smallskip

Since $Y$ as subscheme of $\bP_{k'}^n$ is given by $\Ee$-invariant polynomials, the restriction of the action of $\Ee$ to $Y\subseteq\bP_{k'}^n$ is well-defined. Furthermore, this action is admissible, $\pi$-semilinear and avoids fixed points since $Y$ is contained in $W$. Hence $Y$ has all the desired properties.
\end{proof}


\subsection{The main result} \label{subsec:mainresult}
We wish to construct a $k$-form of a Godeaux-Serre variety for a given continuous extension of $\Gal_k$ by a finite group. The strategy is to reduce this extension to an extension of $\Gal(k'|k)$ for some finite Galois extension $k'|k$. This is done in the following Lemma:

\begin{lem} \label{lem:profinext}
For a given finite group $G$ and continuous extension of profinite groups
\[
1 \longto G \xrightarrow{~\iota~} E \xrightarrow{~\pi~} \Gamma \longto 1,
\]
there exists an open normal subgroup $H\unlhd E$ which is under $\pi$ isomorphic to an open normal subgroup $H'\unlhd\Gamma$. In this case, we have $E\cong(E/H)\times_{\Gamma/H'}\Gamma$.
\end{lem}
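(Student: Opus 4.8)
The plan is to exploit that $E$ is profinite and the map $\pi\colon E\to\Gamma$ is a continuous surjection with finite kernel $G$. First I would produce a single open normal subgroup of $E$ that misses $G$ entirely. Since $G$ is finite and $E$ is Hausdorff, for each $g\in G\setminus\{1\}$ one can pick an open normal subgroup of $E$ avoiding $g$ (profinite groups have a neighbourhood basis of the identity consisting of open normal subgroups); intersecting the finitely many such subgroups yields an open normal $H\unlhd E$ with $H\cap G=\{1\}$. Shrinking $H$ further (again using finiteness of $G$) I can also arrange $H$ to be contained in the centralizer... actually this is not needed; what is needed is only $H\cap\iota(G)=\{1\}$, so the construction above suffices.

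The next step is to transport $H$ to $\Gamma$. Because $H\cap\iota(G)=\{1\}$, the restriction $\pi|_H\colon H\to\pi(H)$ is injective, hence a continuous bijection onto $H':=\pi(H)$; since $H$ is open in $E$ and $\pi$ is a continuous open surjection of profinite groups (a surjective continuous homomorphism of profinite groups is open, being a closed map onto a Hausdorff space with compact fibres, combined with surjectivity), $H'=\pi(H)$ is open in $\Gamma$. Moreover $H'$ is normal in $\Gamma$ since $H$ is normal in $E$ and $\pi$ is surjective. Finally $\pi|_H\colon H\to H'$ is a continuous bijective homomorphism between profinite (in fact compact Hausdorff) groups, hence a topological isomorphism. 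This establishes the existence claim.

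For the displayed fibre-product description, I would first note that $\pi$ induces a surjection $\bar\pi\colon E/H\to\Gamma/H'$ (well-defined because $\pi(H)=H'$), and combine it with the quotient $q\colon E\to E/H$ and with $\pi\colon E\to\Gamma$ to get a map
\[
E\longto (E/H)\times_{\Gamma/H'}\Gamma,\qquad x\longmapsto (q(x),\pi(x)),
\]
which is well-defined since $\bar\pi(q(x))$ equals the image of $\pi(x)$ in $\Gamma/H'$. This map is a continuous homomorphism. Injectivity: if $q(x)=1$ and $\pi(x)=1$ then $x\in H$ and $x\in\ker\pi=\iota(G)$, so $x\in H\cap\iota(G)=\{1\}$. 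Surjectivity: given $(aH,\gamma)$ with $aH\mapsto \gamma H'$, pick a representative $a\in E$; then $\pi(a)\gamma^{-1}\in H'$, so write $\pi(a)\gamma^{-1}=\pi(h)$ for the unique $h\in H$ with $\pi(h)=\pi(a)\gamma^{-1}$ (using the isomorphism $\pi|_H\colon H\to H'$), and check that $x:=h^{-1}a$ satisfies $q(x)=aH$ and $\pi(x)=\gamma$. Thus the map is a continuous bijective homomorphism of profinite groups, hence an isomorphism of topological groups.

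The only mildly delicate points are purely topological: that a continuous surjective homomorphism of profinite groups is open (so $H'$ is open) and that a continuous bijective homomorphism between profinite groups is a homeomorphism (so the various bijections are isomorphisms). Both follow from compactness of the source and Hausdorffness of the target. I expect these to be the main obstacle only in the sense of needing to be stated carefully; the algebraic content — constructing $H$ via finiteness of $G$ and verifying the fibre-product identity — is routine.
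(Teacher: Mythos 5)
Your proposal is correct and follows essentially the same route as the paper: choose an open normal $H\unlhd E$ with $H\cap\iota(G)=\{1\}$ (which you justify by intersecting finitely many open normal subgroups, a step the paper merely asserts), transport it to $H'=\pi(H)\unlhd\Gamma$, and verify the fibre-product identity --- your explicit injectivity/surjectivity check is just an unwinding of the paper's observation that the square is cartesian because $\pi$ and the induced map $E/H\to\Gamma/H'$ have the same kernel. The topological points you flag (openness of $\pi$, continuous bijections of profinite groups being isomorphisms) are handled correctly.
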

\begin{proof}
Since $G$ is a finite subgroup in the profinite group $E$, there exists an open normal subgroup $H\unlhd E$ such that $G\cap H =\{1\}$. The image $H':=\pi(H)$ is isomorphic to $H$ since the restriction of $\pi$ to $H$ is injective. Furthermore, it is an open normal subgroup of $\Gamma$ since $\pi$ is surjective. Hence we obtain the following commutative diagram with exact rows:
\[
\begin{tikzpicture}[descr/.style={fill=white,fill opacity=0.75,inner sep=0.5pt}]
\matrix (m) [matrix of math nodes, row sep=1.5em, column sep=2em, text height=1.5ex, text depth=0.25ex]
{ 1 & G & E & \Gamma & 1 \\ 1 & G & E/H & \Gamma/H' & 1\rlap{.}\\};
\path[->,font=\scriptsize] (m-1-1) edge (m-1-2) (m-1-2) edge node[auto] {$\iota$} (m-1-3) (m-1-3) edge (m-2-3) edge node[auto] {$\pi$} (m-1-4) (m-1-4) edge (m-2-4) edge (m-1-5) (m-2-1) edge (m-2-2) (m-2-2) edge (m-2-3) (m-2-3) edge node[auto] {$\tilde\pi$} (m-2-4) (m-2-4) edge (m-2-5);
\path[double distance=2pt] (m-1-2) edge [double] (m-2-2);
\end{tikzpicture}
\]
The right square of the diagram is cartesian since $\pi$ and $\tilde\pi$ have the same kernel. Therefore, $E\cong(E/H)\times_{\Gamma/H'}\Gamma$.
\end{proof}

We now come to the main result.

\begin{thm}\label{thm:mainresult}
Let $k$ be a field, $G$ a finite group, $r\in\bN$ with $r\geq2$ and
\begin{equation}
1 \longto G \stackrel{\iota}{\longto} E \stackrel{\pi}{\longto} \Gal_k \longto 1 \label{exseq:absgal}
\end{equation}
a continuous extension of profinite groups. There exists a geometrically integral smooth projective variety $X$ over $k$ of dimension $r$ such that the exact sequence
\[
1 \longto \pi_1(X\otimes_k\k,\x') \longto \pi_1(X,\x) \longto \Gal_k \longto 1,
\]
where $\x'\in (X\otimes_k\k)(\Omega)$ and $\x\in X(\Omega)$ is the image of $\x'$, is isomorphic to \eqref{exseq:absgal}.
\end{thm}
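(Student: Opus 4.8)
The plan is to combine Lemma \ref{lem:profinext}, which reduces the profinite extension to a finite one, with Proposition \ref{prop:constr}, which produces a complete intersection carrying the desired semilinear action, and then Proposition \ref{fundgrpgal} together with Proposition \ref{prop:semilinact} to identify the fundamental group sequence. First I would apply Lemma \ref{lem:profinext} to the extension \eqref{exseq:absgal} with $\Gamma=\Gal_k$: this yields an open normal subgroup $H\unlhd E$ with $G\cap H=\{1\}$, so that $H\cong H':=\pi(H)$, which is an open normal subgroup of $\Gal_k$. Let $k'|k$ be the finite Galois subextension of $k_s|k$ with $\Gal(k_s|k')=H'$, so $\Gal(k'|k)=\Gal_k/H'$, and set $\Ee:=E/H$. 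Then Lemma \ref{lem:profinext} gives $E\cong\Ee\times_{\Gal(k'|k)}\Gal_k$, and we have a finite group extension $1\to G\to\Ee\xrightarrow{\bar\pi}\Gal(k'|k)\to1$.

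Next I would feed $r$, the extension $k'|k$ and this finite extension $1\to G\to\Ee\to\Gal(k'|k)\to1$ into Proposition \ref{prop:constr} to obtain a smooth projective geometrically connected variety $Y\subseteq\bP_{k'}^n$ of dimension $r$, a complete intersection, on which $\Ee$ acts $\bar\pi$-semilinearly and admissibly without fixed points. Put $X:=Y/\Ee$, a projective variety over $k$ with structure morphism $\phi:X\to\Spec k$; by Proposition \ref{prop:semilinact} (applied with $E=\Ee$) there is a $\Gal(k'|k)$-equivariant isomorphism $Y/G\cong X\otimes_kk'$, and since $Y/G$ is smooth projective geometrically connected of dimension $r$ over $k'$ (it is finite étale over $Y/\Ee$ hence inherits these properties, using that $Y\to Y/G$ is itself finite étale by Proposition \ref{prop:quotmapetale}), $X$ is a geometrically integral smooth projective variety over $k$ of dimension $r$. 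Fix a geometric point $\y\in Y(\Omega)$ with image $\x\in X(\Omega)$; Proposition \ref{prop:quotmapetale}(3) applied to the $\Ee$-action on $Y$ gives the exact sequence
\[
1\longto\pi_1(Y,\y)\longto\pi_1(X,\x)\xrightarrow{~\Phi~}\Ee\longto1,
\]
while Proposition \ref{fundgrpgal} gives the commutative square relating $\Phi$, $\phi_\ast:\pi_1(X,\x)\to\pi_1(\Spec k,\phi(\x))=\Gal_k$, the homomorphism $\bar\pi:\Ee\to\Gal(k'|k)$, and the canonical surjection $\Psi:\Gal_k\twoheadrightarrow\Gal(k'|k)$ (which has kernel $H'$).

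Then I would assemble the identification. From the square $\bar\pi\circ\Phi=\Psi\circ\phi_\ast$ one gets a map $(\Phi,\phi_\ast):\pi_1(X,\x)\to\Ee\times_{\Gal(k'|k)}\Gal_k\cong E$; I must check it fits into a map of extensions, i.e.\ that $\phi_\ast$ has image all of $\Gal_k$ (true since $X$ is geometrically connected, by \eqref{exseq:arithmpi1}) and that $(\Phi,\phi_\ast)$ restricts on $\pi_1(X\otimes_k\k,\x')=\ker\phi_\ast$ to an isomorphism onto $G=\ker\Psi\times_{\{1\}}\{1\}\subseteq\Ee\times_{\Gal(k'|k)}\Gal_k$. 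For the latter: $\ker\phi_\ast$ maps into $\ker\bar\pi\circ\Phi\cap\ker\Psi$... more precisely, on $\ker\phi_\ast$ the second coordinate is trivial, and $\bar\pi\circ\Phi|_{\ker\phi_\ast}=\Psi\circ\phi_\ast|_{\ker\phi_\ast}=1$, so $\Phi$ maps $\ker\phi_\ast$ into $\ker\bar\pi=G$; conversely $\Phi^{-1}(G)$ is, by Proposition \ref{prop:quotmapetale}(3) for the $G$-action on $Y$ and the identification $Y/G\cong X\otimes_kk'$, exactly $\pi_1(Y/G,\cdot)=\pi_1(X\otimes_kk',\cdot)$, and then one has to descend from $k'$ to $\k$: since $\pi_1(X\otimes_k\k)\subseteq\pi_1(X\otimes_kk')$ is the kernel of the further projection to $H'=\Gal(k_s|k')$, and on $\pi_1(X\otimes_kk')$ the composite to $H'$ is again computed by $\phi_\ast$, we get $\ker\phi_\ast=\Phi^{-1}(G)\cap\ker\phi_\ast=\pi_1(X\otimes_k\k)$ and $\Phi$ is injective on it (it is injective on all of $\pi_1(Y,\y)$), with image $G$ by surjectivity of $\Phi$ onto $\Ee$ combined with $\ker\phi_\ast$ surjecting onto $G$ (chase the snake lemma between the three exact rows). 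Finally the five lemma shows $(\Phi,\phi_\ast)$ is an isomorphism of extensions $\pi_1(X,\x)\xrightarrow{\sim}E$, carrying the sequence \eqref{exseq:arithmpi1} to \eqref{exseq:absgal}.

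\textbf{Main obstacle.} The delicate point is the bookkeeping of fundamental groups over the three layers $\k$, $k'$, $k$: one must carefully reconcile $\pi_1(X\otimes_k\k)$, $\pi_1(X\otimes_kk')\cong\pi_1(Y/G)$, and $\pi_1(Y)$, and verify that the homomorphism $\Phi$ restricted to $\pi_1(X\otimes_k\k)$ lands in and surjects onto the copy of $G$ inside $E\cong\Ee\times_{\Gal(k'|k)}\Gal_k$ — equivalently, that the square of Proposition \ref{fundgrpgal} together with the functoriality of $\Phi$ (Proposition \ref{prop:equivariant}) glues correctly to a genuine morphism of short exact sequences, after which the five lemma finishes the argument.
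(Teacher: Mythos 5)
Your overall strategy coincides with the paper's: reduce to a finite extension of $\Gal(k'|k)$ via Lemma \ref{lem:profinext}, build $Y$ with Proposition \ref{prop:constr}, set $X:=Y/\Ee$, use the fiber‑product description $E\cong\Ee\times_{\Gal(k'|k)}\Gal_k$ to assemble $\varphi=(\Phi,\phi_\ast):\pi_1(X,\x)\to E$, and finish with the five lemma. However, there is a genuine gap at the decisive step: you never invoke the simple connectivity of the geometric fiber $Y\otimes_{k'}\k$. By the Lefschetz hyperplane theorem (\cite[IV Cor.2.2]{hartshorneample}, together with $\pi_1(\bP^n_{\k})=1$), a smooth complete intersection of dimension $\geq2$ over $\k$ is simply connected; this is the entire reason the construction insists on a complete intersection and on $r\geq2$, and it is exactly what makes $\pi_1(X\otimes_k\k,\x')\to G$ \emph{injective}. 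Without it, Proposition \ref{prop:quotmapetale} only gives that $\pi_1(X\otimes_k\k)$ is an extension of $G$ by $\pi_1(Y\otimes_{k'}\k)$, and the five lemma cannot be applied.

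Concretely, your justification of injectivity reads ``$\Phi$ is injective on $\ker\phi_\ast$ (it is injective on all of $\pi_1(Y,\y)$)'', which is false as stated: by Proposition \ref{prop:quotmapetale}(3) applied to the $\Ee$-action, $\pi_1(Y,\y)$ is precisely the \emph{kernel} of $\Phi=\Phi_{\Ee,\y}$, and $\phi_\ast$ restricted to $\pi_1(Y,\y)$ is the surjection onto $H'=\Gal_{k'}$ with kernel $\pi_1(Y\otimes_{k'}\k,\y')$. Hence $\ker(\Phi,\phi_\ast)=\pi_1(Y\otimes_{k'}\k,\y')$, and the argument closes only once you cite Lefschetz to conclude this group is trivial (equivalently, that $\Phi_{G,\y'}:\pi_1(X\otimes_k\k,\x')\to G$ is an isomorphism, as in the paper). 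With that one citation added, the rest of your bookkeeping across the layers $\k$, $k'$, $k$ is essentially the paper's argument.
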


\begin{proof}
By Lemma \ref{lem:profinext}, there is an open normal subgroup $H\unlhd E$ which is under $\pi$ isomorphic to an open normal subgroup $H'$ of $\Gal_k$. Let $k'\subseteq\k$ be the finite Galois extension of $k$ corresponding to $H'$. By setting $\Ee:=E/H$, we obtain the following commutative diagram with exact rows:
\[
\begin{tikzpicture}[descr/.style={fill=white,fill opacity=0.75,inner sep=0.5pt}]
\matrix (m) [matrix of math nodes, row sep=1.5em, column sep=2.5em, text height=1.75ex, text depth=0.25ex]
{ 1 & G & E & \Gal_k & 1 \\ 1 & G & \Ee & \Gal(k'|k) & 1\rlap{.}\\};
\path[->,font=\scriptsize] (m-1-1) edge (m-1-2) (m-1-2) edge node[auto] {$\iota$} (m-1-3) (m-1-3) edge (m-2-3) edge node[auto] {$\pi$} (m-1-4) (m-1-4) edge (m-2-4) edge (m-1-5) (m-2-1) edge (m-2-2) (m-2-2) edge node [auto] {$\widetilde\iota$} (m-2-3) (m-2-3) edge node [auto] {$\widetilde\pi$} (m-2-4) (m-2-4) edge (m-2-5);
\path[double distance=2pt] (m-1-2) edge [double] (m-2-2);
\end{tikzpicture}
\]
For the lower exact sequence, we can find by Proposition \ref{prop:constr} a geometrically connected smooth projective variety $Y$ of dimension $r$ which is a complete intersection in $\bP_{k'}^n$, on which $\Ee$ acts admissibly, $\pi$-semilinearly and without fixed points. Then the quotient $X:=Y/\Ee$ is a projective variety over $k$. It is geometrically connected and smooth over $k$ since
\[
X\otimes_k\k\cong(X\otimes_kk')\otimes_{k'}\k\cong(Y/G)\otimes_{k'}\k\cong(Y\otimes_{k'}\k)/G
\]
and $Y\otimes_{k'}\k$ is connected and regular. Now let $\phi:Y\to\Spec{k'}$ and $\psi:X\to\Spec{k}$ be the structure morphism and $\y'\in(Y\otimes_{k'}\k)(\Omega)$ with images $\y,\x',\x$ in $Y,X\otimes_k\k,X$ respectively. Consider the following diagram:
\begin{center}
\begin{tikzpicture}[descr/.style={fill=white,fill opacity=0.75,inner sep=0.5pt}]
\matrix (m) [matrix of math nodes, row sep=2em, column sep=1em, text height=1.75ex, text depth=0.25ex]
{1 && \pi_1(X\otimes_k\k,\x') && \pi_1(X,\x) && \pi_1(\Spec{k},\psi(\x)) && 1 & \\[1ex] & 1 && G && E && \Gal_k && 1\\[-1ex] 1 && G && \Ee && \Gal(k'|k) && 1\rlap{.} &\\};
\path[->,font=\scriptsize] (m-1-1) edge (m-1-3) (m-1-3) edge node[auto] {$\pr_{1,\ast}$} (m-1-5) edge node[above left] {$\Phi_{G,\y'}$} (m-3-3) (m-1-3.335) edge (m-2-4.135) (m-1-5) edge node[auto] {$\psi_\ast$} (m-1-7) edge node[above left] {$\Phi_{\Ee,\y}$} (m-3-5) (m-1-7) edge (m-1-9) edge(m-3-7) (m-2-2) edge (m-2-4) (m-2-4) edge node[above right] {~~$\iota$} (m-2-6) (m-2-6) edge node[above right] {~~$\pi$} (m-2-8) (m-2-6.220) edge node[right=2.5pt] {$p$} (m-3-5.45) (m-2-8) edge (m-2-10) (m-2-8.200) edge node[right=2.5pt] {$q$} (m-3-7.30) (m-3-1) edge (m-3-3) (m-3-3) edge node[auto] {$\widetilde\iota$} (m-3-5) (m-3-5) edge node[auto] {~~$\widetilde\pi$} (m-3-7) (m-3-7) edge (m-3-9) (m-1-7.330) edge node[right=0pt] {$\cong$} (m-2-8.145);
\path[double distance=2pt] (m-2-4.205) edge [double] (m-3-3.25);
\path[->,dashed,font=\scriptsize] (m-1-5.300) edge (m-2-6.135);
\end{tikzpicture}
\end{center}
Here the isomorphism between $\pi_1(\Spec{k},\psi(\x))$ and $\Gal_k$ is given by an embedding $k_s\hookrightarrow\Omega$ lying over $\phi(\y)\in(\Spec{k'})(\Omega)$. The first and the third rows are compatible with the vertical arrows by Propositions \ref{prop:equivariant} and \ref{fundgrpgal}, i.e.~the whole diagram up to the dashed arrow is commutative.

\smallskip

We now construct an isomorphism between $\pi_1(X)$ and $E$. Since $\widetilde\pi\circ\Phi_{\Ee,\y} = q\circ\psi_\ast$ and $(E,p,\pi)$ is the fiber product of $\Ee\xrightarrow{\widetilde\pi}\Gal(k'|k)$ and $\Gal_k\xrightarrow{q}\Gal(k'|k)$, there exists a unique profinite group homomorphism $\varphi:\pi_1(X,\x)\to E$ such that $\Phi_{\Ee,\y}=p\circ\varphi$ and $\psi_\ast=\pi\circ\varphi$, i.e.~the upper right parallelogram is commutative. To see that the left one also commutes, observe that
\begin{gather*}
p\circ\varphi\circ\pr_{1,\ast} = \Phi_{\Ee,\y}\circ\pr_{1,\ast} = \widetilde\iota\circ\Phi_{G,\y'} = p\circ\iota\circ\Phi_{G,\y'} \rlap{\quad and}\\
\pi\circ\varphi\circ\pr_{1,\ast} = \psi_\ast\circ\pr_{1,\ast} = 1 = \pi\circ\iota\circ\Phi_{G,\y'}.
\end{gather*}
Hence by the universal property of the fiber product, we have $\varphi\circ\pr_{1,\ast} = \iota\circ\Phi_{G,\y'}$. Therefore, the whole diagram above is commutative.

\smallskip

Since $Y\otimes_{k'}\k$ is a complete intersection in $\bP_\k^n$ as shown in Proposition \ref{prop:constr}, $\pi_1(Y\otimes_{k'}\k,\y')=1$ by the Lefschetz Hyperplane Theorem, see \cite[IV Cor.2.2]{hartshorneample}, and the fact that $\pi_1(\bP_\k^n)=1$, see \cite[XI Prop.1.1]{SGA1}. Hence $\Phi_{G,\y'}:\pi_1(X\otimes_k\k,\x')\to G$ is an isomorphism by Proposition \ref{prop:quotmapetale}. Since the left and right vertical arrows between the first two lines of the diagram above are isomorphisms, $\varphi:\pi_1(X,\x)\to E$ is also an isomorphism by the (not necessarily commutative) five lemma and we are done.
\end{proof}

\begin{rmk}
The base change from $k$ to $\k$ of the variety constructed in Theorem \ref{thm:mainresult} is indeed a Godeaux-Serre variety. Hence what we have constructed is a $k$-form of a Godeaux-Serre variety with prescribed arithmetic fundamental group.
\end{rmk}

\bibliographystyle{amsalpha}
\bibliography{godeaux-serre}

\providecommand{\bysame}{\leavevmode\hbox to3em{\hrulefill}\thinspace}
\providecommand{\MR}{\relax\ifhmode\unskip\space\fi MR }
\providecommand{\MRhref}[2]{%
  \href{http://www.ams.org/mathscinet-getitem?mr=#1}{#2}
}
\providecommand{\href}[2]{#2}
\begin{thebibliography}{Bou89}

\bibitem[Ara95]{arapura}
Donu Arapura, \emph{Fundamental group of smooth projective varieties}, Current
  topics in complex algebraic geometry, Mathematical Sciences Research
  Institute publications, vol.~28, Cambridge University Press, 1995, pp.~1--16.

\bibitem[Bou89]{bouac}
Nicolas Bourbaki, \emph{Commutative algebra}, Springer, Berlin, Heidelberg,
  1989.

\bibitem[Har70]{hartshorneample}
Robin Hartshorne, \emph{Ample subvarieties of algebraic varieties}, Lecture
  notes in mathematics, vol. 156, Springer, 1970.

\bibitem[HS12]{harari-stix}
David Harari and Jakob Stix, \emph{Descent obstruction and fundamental exact
  sequence}, The Arithmetic of Fundamental Groups - PIA 2010 (Heidelberg)
  (Jakob Stix, ed.), Contributions in Mathematical and Computational Sciences,
  vol.~2, Springer-Verlag, 2012, pp.~147--166.

\bibitem[Poo04]{poonen02bertinitheorems}
Bjorn Poonen, \emph{Bertini theorems over finite fields}, The Annals of
  Mathematics, Second Series \textbf{160} (2004), no.~3, 1099--1127.

\bibitem[Ser58]{serre58}
Jean-Pierre Serre, \emph{Sur la topologie des vari{\'e}t{\'e}s alg{\'e}briques
  en caract{\'e}ristique $p$}, Symposium internacional de topolog{\'\i}a
  algebraica, Universidad Nacional Aut{\'o}noma de M{\'e}xico y la UNESCO,
  1958, pp.~24--53.

\bibitem[SGA1]{SGA1}
Alexander Grothendieck, \emph{{S}{\'e}minaire de {G}{\'e}om{\'e}trie
  {A}lg{\'e}brique du {B}ois {M}arie 1960--1961 ({SGA} 1): {R}ev{\^e}tements
  {{\'E}}tales {G}roupe {F}ondamental}, Lecture notes in mathematics, vol. 224,
  Springer, Berlin, Heidelberg, 1971.
  
\end{thebibliography}

\end{document}